\documentclass[11pt]{amsart}

\usepackage{amsmath,amssymb,amsthm}
\usepackage[margin=1in]{geometry}
\usepackage{hyperref}
\hypersetup{colorlinks=true,linkcolor=black,citecolor=black,urlcolor=blue}

\newtheorem{theorem}{Theorem}[section]
\newtheorem{lemma}[theorem]{Lemma}

\newtheorem{question}[theorem]{Question}

\theoremstyle{definition}

\newtheorem{remark}[theorem]{Remark}

\DeclareMathOperator{\Aut}{Aut}

\newcommand{\GL}{\mathrm{GL}}
\newcommand{\Fq}{\mathbb{F}_q}

\title[Conjugacy classes of $\GL_n(\mathcal O_2)$ via a canonical section]
{A canonical section method for conjugacy \\ classes of $\GL_n(\mathcal O_2)$}

\author{Pooja Singla}
\address{Department of Mathematics, Indian Institute of Technology Kanpur, Kanpur 208016, India}
\email{psingla@iitk.ac.in}

\subjclass[2020]{Primary 20G25; Secondary 15A21, 20E45}
\keywords{Conjugacy classes, general linear groups, local rings, centralizers, class equation, Onn's conjecture}
\date{\today}

\begin{document}
	
	\begin{abstract}
		Let $\mathcal O$ be the ring of integers of a non-Archimedean local field with
		finite residue field, let $\wp$ be its maximal ideal, and let
		$\mathcal O_2 = \mathcal O/\wp^2$. We show that the class equation of
		$\GL_n(\mathcal O_2)$ depends on $\mathcal O$ only through the cardinality
		of its residue field: for two such rings $\mathcal O$ and $\mathcal O'$
		with isomorphic (finite) residue fields, there is a canonical bijection
		between the conjugacy classes of $\GL_n(\mathcal O_2)$ and of
		$\GL_n(\mathcal O_2')$ which preserves the size of every class. The
		argument constructs a section of the reduction map
		$\GL_n(\mathcal O_2)\to \GL_n(\mathcal O_1)$ which is multiplicative on
		the centralizer of any element in its block Jordan canonical form, and
		uses it to transport the classification of conjugacy classes lying above
		a fixed class of $\GL_n(\mathcal O_1)$ from one ring to the other. This
		note records the original argument for this result, developed in the
		author's 2010 doctoral thesis \cite{Thesis}, which predates and is
		independent of two later proofs of closely related statements: the
		Ext-theoretic classification of similarity classes for $n\le 4$ in
		\cite{PSS2015}, and the Hom-theoretic approach of Jambor and Plesken
		\cite{JP2012} for general uniserial rings of length two. We record the
		centralizer-section argument here since it seems to be of independent
		interest and several colleagues have asked to see it in print.
	\end{abstract}
	
	\maketitle
	
	\section{Introduction}
	
	Let $F$ be a non-Archimedean local field with ring of integers $\mathcal O$,
	unique maximal ideal $\wp$, and fixed uniformizer $\pi$. Assume the residue
	field $\mathcal O/\wp$ is finite, of cardinality $q$. For a positive integer
	$\ell$ write $\mathcal O_\ell = \mathcal O/\wp^\ell$, and for a natural
	number $n$ write $\GL_n(\mathcal O_\ell)$ for the group of invertible
	$n\times n$ matrices over $\mathcal O_\ell$. Since $\GL_n(\mathcal O)$ is a
	maximal compact subgroup of $\GL_n(F)$ and every continuous representation
	of $\GL_n(\mathcal O)$ factors through some $\GL_n(\mathcal O)\to
	\GL_n(\mathcal O_\ell)$, the finite groups $\GL_n(\mathcal O_\ell)$ occupy a
	central place in the representation theory of $p$-adic $\GL_n$; see
	\cite{Green55,Hill93,Hill95,Onn08,AOPS} for background and further
	references.
	
	Onn \cite[Conjecture~1.2]{Onn08} conjectured that the isomorphism type of
	the group algebra $\mathbb C[G_\lambda]$ of the automorphism group of a
	finite $\mathcal O$-module of type $\lambda$ depends only on $\lambda$ and
	on $q$, not on $\mathcal O$ itself. In the author's thesis \cite{Thesis},
	this conjecture was verified for $\GL_n(\mathcal O_2)$. It was shown that
	there is a canonical, dimension-preserving bijection between
	$\mathrm{Irr}(\GL_n(\mathcal O_2))$ and $\mathrm{Irr}(\GL_n(\mathcal O'_2))$
	whenever $\mathcal O$ and $\mathcal O'$ have isomorphic residue fields
	\cite[Theorem~1.2.1]{Thesis}, subsequently published as \cite{SinglaJA2010}.
	Since the number of irreducible representations of a finite group equals
	its number of conjugacy classes, this already shows that $\GL_n(\mathcal
	O_2)$ and $\GL_n(\mathcal O'_2)$ have the same number of conjugacy classes.
	Chapter~5 of \cite{Thesis} sharpens this to the level of the full class
	equation:
	
	\begin{theorem}\label{thm:main}
		Let $\mathcal O$ and $\mathcal O'$ be rings of integers of non-Archimedean
		local fields with isomorphic finite residue fields. Then for every $n$
		there is a canonical bijection between the conjugacy classes of
		$\GL_n(\mathcal O_2)$ and those of $\GL_n(\mathcal O'_2)$ which preserves
		the cardinality of each conjugacy class.
	\end{theorem}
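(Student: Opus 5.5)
Let $\rho:\GL_n(\mathcal O_2)\to\GL_n(\mathcal O_1)$ be reduction, with kernel $K=1+\pi M_n(\mathcal O_2)$. Since $\pi M_n(\mathcal O_2)\cong M_n(\Fq)$ as a $\GL_n(\mathcal O_1)$-module, $K$ is abelian and isomorphic to $M_n(\Fq)$ for \emph{any} $\mathcal O$. The plan is to fibre the classes of $\GL_n(\mathcal O_2)$ over the classes of $\GL_n(\mathcal O_1)=\GL_n(\Fq)$, which are the same for $\mathcal O$ and $\mathcal O'$. We then match the classes in each fibre by a bijection that preserves sizes.

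Fix a class of $\GL_n(\Fq)$ and its representative $A$ in block Jordan canonical form. Let $Z=C_{\GL_n(\Fq)}(A)$. Every class of $\GL_n(\mathcal O_2)$ over the class of $A$ meets $\rho^{-1}(A)$. Two elements of $\rho^{-1}(A)$ are conjugate in $\GL_n(\mathcal O_2)$ if and only if they are conjugate by $\rho^{-1}(Z)$. Now choose a lift $\tilde A$ and write the fibre as $\rho^{-1}(A)=\{\tilde A(1+\pi X): X\in M_n(\Fq)\}$. The key construction is a section $s:\GL_n(\Fq)\to\GL_n(\mathcal O_2)$ built from Teichmüller-type lifts of the entries, adapted to the block Jordan form. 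We take $\tilde A=s(A)$ and require that $s|_Z$ be a group homomorphism commuting with $s(A)$. Granting this, $\rho^{-1}(Z)=s(Z)\ltimes K$. Conjugation by $1+\pi Y\in K$ sends $X\mapsto X+ Y - A^{-1}YA$ modulo $\pi$, and conjugation by $s(z)$ sends $X\mapsto zXz^{-1}$, because $s(z)$ commutes with $s(A)$. Hence the orbits of $\rho^{-1}(Z)$ on the fibre are the orbits of $Z$ acting by conjugation on the quotient $M_n(\Fq)/\mathrm{Im}(\mathrm{ad}_A)$, where $\mathrm{ad}_A(Y)=Y-A^{-1}YA$. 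This description involves only $\Fq$-linear data.

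Class sizes come out the same way. The class of $\tilde A(1+\pi X)$ has size $[\GL_n(\Fq):Z]$ times the size of its $\rho^{-1}(Z)$-orbit in the fibre. That orbit size equals $|\mathrm{Im}(\mathrm{ad}_A)|$ times the size of the $Z$-orbit of $\bar X$ in the quotient. Running the same construction for $\mathcal O'$ therefore yields literally the same combinatorial problem. Identifying the residue fields makes $s$, and the resulting orbit bijection, canonical and size-preserving. Taking the union over all classes of $\GL_n(\Fq)$ gives Theorem~\ref{thm:main}.

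The main obstacle is constructing $s$ so that it is multiplicative on $Z$ and $s(Z)$ centralizes $s(A)$, uniformly for ramified and unramified $\mathcal O$. Note that $\mathcal O_2$ need not contain a copy of $\Fq$; for example $\mathbb Z/p^2$ does not contain $\mathbb F_p$ as a subring. For this reason I would not lift entrywise. Instead I would decompose $A$ by its primary components $A=\bigoplus_f A_f$, where $f$ ranges over irreducible polynomials, so that $Z=\prod_f C(A_f)$ and $C(A_f)\cong\GL_{m}(\mathbb F_{q^{d}}[t]/t^k)$-type groups of units of $\mathbb F_{q^d}[u]/(u^k)$-linear automorphisms of a module of fixed type. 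I would then lift $\mathbb F_{q^d}$ through the unramified extension of $\mathcal O_2$ via Teichmüller representatives, which are multiplicative, and treat the nilpotent part by writing $A_f$ as a sum of nilpotent Jordan blocks with $0/1$ entries. Centralizer elements are block Toeplitz polynomials in these blocks, and I would lift them by lifting coefficients. The delicate point is that Teichmüller lifts are not additive. Multiplicativity of $s$ on the Toeplitz blocks then fails by $\pi$-terms, so I would have to verify that the correction lies in $\mathrm{Im}(\mathrm{ad}_A)$, or choose a lift for which it vanishes. If that fails, the fallback is to show that the relevant extension class of $\rho^{-1}(Z)$ by $K$ is canonically independent of $\mathcal O$.
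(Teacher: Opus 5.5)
Your reduction is the same as the paper's. You fibre over the classes of $\GL_n(\Fq)$, lift a block-Jordan representative $A$ to $\tilde A$, write $\rho^{-1}(A)$ as $\tilde A(1+\pi X)$, and read off the action of $\rho^{-1}(Z)$. Your description by $Z$-orbits on $M_n(\Fq)/\mathrm{Im}(\mathrm{ad}_A)$ and your class-size formula are both correct.

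The gap is in the key step. You require $s|_Z$ to be a homomorphism, so that $\rho^{-1}(Z)=s(Z)\ltimes K$, and this is false in general. Take $\mathcal O=\mathbb Z_p$ with $p\ge 5$, $n=2$, and $A=I+N$ with $N=E_{12}$. Then $A\in Z$ has order $p$. But every lift $I+N+pM$ satisfies $(I+N+pM)^p\equiv I+pN\not\equiv I \pmod{p^2}$, for two reasons:
\begin{itemize}
\item the terms $\binom{p}{k}(N+pM)^k$ with $2\le k\le p-1$ vanish mod $p^2$;
\item every word in $(N+pM)^p$ with a single factor $pM$ contains $N^2=0$.
\end{itemize}
So no homomorphic section exists on $Z$, and the $\pi$-corrections you worry about cannot be removed. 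Your fallback fails as well. Over $\mathbb F_p[[t]]$ the extension $\rho^{-1}(Z)\to Z$ splits via constant matrices, while over $\mathbb Z_p$ it does not. Hence its class in $H^2(Z,M_2(\mathbb F_p))$ depends on $\mathcal O$.

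The missing observation is that multiplicativity is never used. Your own computation of the action of $s(z)$ used only $s(z)s(A)=s(A)s(z)$. For each $z\in Z$, choose any lift $\tilde z$ commuting with $\tilde A$. Then every $g\in\rho^{-1}(Z)$ is uniquely $\tilde z(1+\pi Y)$, and it acts on the fibre by $X\mapsto z\,(X-\mathrm{ad}_A(Y))\,z^{-1}$. Orbits and stabilizer orders, hence class sizes, are computed from $\Fq$-data alone, whether or not $z\mapsto\tilde z$ is a homomorphism. So the lemma you actually need is only that $C_{\GL_n(\mathcal O_2)}(\tilde A)\to Z$ is surjective; this is exactly the paper's Lemma~\ref{lem:section}.

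This is also where your reluctance to lift entrywise is misplaced. In the split case, take $\tilde A=s(A)$ and $\tilde z=s(z)$, with $s$ the entrywise Teichm\"uller lift. Distinct eigenvalues have lifts differing by units. So Lemma~\ref{lem:distinct-eigen} over $\mathcal O_2$, together with Lemma~\ref{lem:toeplitz} (valid over any commutative ring), makes commuting with $s(A)$ a pure shape condition: block diagonal by eigenvalue, and block upper Toeplitz within each eigenvalue block. Entrywise lifting with $s(0)=0$ preserves zero entries and equal entries, hence preserves this shape. No additivity, no multiplicativity, and no subring copy of $\Fq$ in $\mathcal O_2$ is needed. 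The non-split case is the same argument over the unramified extension $\mathcal O_2[s(C_f)]$, as you suggest.
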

	
	A different, independent proof of a closely related statement was later
	given, jointly with Amritanshu Prasad and Steven Spallone, in
	\cite{PSS2015}: there a theory of normal forms for similarity classes in
	$M_n(R)$, $R$ a local principal ideal ring of length two, is developed by
	interpreting similarity classes as extensions of $R[t]$-modules, and used
	to describe the similarity classes in $M_n(R)$ explicitly, together with
	their centralizers, for $n\le 4$ (see also \cite{AOPV09} for the case
	$\ell=2$, $n=3$, over $\mathcal O$ itself, and \cite{Nechaev83} for an
	earlier treatment of $n=3$). That approach enumerates similarity classes
	directly and, in the range $n \le 4$ it covers, gives much more explicit
	information (centralizer orders, transpose-self-duality, polynomiality in
	$q$) than the argument below. It does not, however, produce the bijection
	of Theorem~\ref{thm:main} for general $n$ directly from a structural
	statement about centralizers, which is the content of the present, earlier
	argument. Since this centralizer-section method has not previously
	appeared in print, we record it here.
	
	Independently, Jambor and Plesken \cite{JP2012} gave a third proof of a
	version of this bijection, for general uniserial rings $R$ of length two
	(not necessarily complete discrete valuation rings), by a method
	complementary to both of the above: rather than transporting a canonical
	extension of characters (as here) or classifying normal forms via
	$\mathrm{Ext}$-groups of $k[t]$-modules (as in \cite{PSS2015}), they lift
	a matrix $A$ over the residue field $k$ to a matrix $A_0$ over $R$ chosen
	so that every matrix commuting with $A$ lifts to one commuting with $A_0$,
	and identify the similarity classes of $M_n(R)$ mapping to the class of
	$A$ with the orbits of the conjugation action of $Z_{\GL_n(k)}(A)$ on
	$Z_{M_n(k)}(A)$ — equivalently (as they observe) with the isomorphism
	classes of $n$-dimensional representations of $k[X,Y]$ supported on the
	class of $A$. As \cite[Remark~1.1]{PSS2015} notes, this $\mathrm{Hom}$-based
	approach and the $\mathrm{Ext}$-based approach of \cite{PSS2015} give the
	same count of classes but organise the bijection differently; the
	centralizer-section approach of the present note gives a third,
	representation-theoretic route to the same statement, working directly
	inside $\GL_n(\mathcal O_2)$ rather than through the module category.
	
	The paper is organized as follows. Section~\ref{sec:prelim} recalls the
	facts about centralizers of matrices over $\mathcal O_1$ that are needed.
	Section~\ref{sec:section} constructs the canonical section on which the
	argument turns (Lemma~\ref{lem:section}) and uses it to prove
	Theorem~\ref{thm:main}. Section~\ref{sec:questions} records some questions
	left open by this approach.
	
	\section{Preliminaries on centralizers}\label{sec:prelim}
	
	Throughout, $\kappa:\GL_n(\mathcal O_2)\to\GL_n(\mathcal O_1)$ denotes the
	reduction map, and $\mathcal O_1=\mathcal O/\wp$ is identified with the
	residue field $\Fq$. We write $K=\ker(\kappa)$; the map $A\mapsto I+\pi A$
	gives an isomorphism $M_n(\Fq)\;\widetilde\to\;K$.
	
	Recall that every matrix $X\in M_n(\Fq)$ is conjugate to a block diagonal
	matrix built from blocks
	\[
	J_r(f)=\begin{pmatrix}
		C_f & I & & & \\
		& C_f & I & & \\
		& & \ddots & \ddots & \\
		& & & C_f & I \\
		& & & & C_f
	\end{pmatrix}_{rd\times rd},
	\]
	one for each irreducible factor $f$ of the characteristic polynomial of
	$X$, where $d=\deg f$, $C_f$ is the companion matrix of $f$, and $r$ ranges
	over the parts of a partition attached to $f$; this is the \emph{block
		Jordan canonical form} of $X$, unique up to reordering of blocks
	(\cite[Theorem~2.3.10]{Thesis}; this is the natural extension, via Hensel's
	lemma applied to $f$, of the classical Jordan form for split matrices). We
	call $X$ \emph{split} if its characteristic polynomial splits over $\Fq$,
	in which case each $f$ is linear and $J_r(f)$ reduces to an ordinary
	elementary Jordan block.
	
	Two structural facts about centralizers, both elementary consequences of
	the primary decomposition, will be used repeatedly.
	
	\begin{lemma}[{\cite[Lemma~2.3.2]{Thesis}}]\label{lem:distinct-eigen}
		Let $a_1,\dots,a_l\in\mathcal O_2$ be such that $a_i-a_j$ is invertible in
		$\mathcal O_2$ for $i\neq j$, and let $A=\bigoplus_{i=1}^l A_i$ be a block
		diagonal matrix over $\mathcal O_2$ in which every diagonal entry of $A_i$
		equals $a_i$. Then $Z_{\GL_n(\mathcal O_2)}(A)=\bigoplus_{i=1}^l
		Z_{\GL_{n_i}(\mathcal O_2)}(A_i)$.
	\end{lemma}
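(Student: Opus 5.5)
The idea is to show that any matrix $B$ commuting with $A$ must be block diagonal with respect to the decomposition $A=\bigoplus_i A_i$. The invertible differences $a_i-a_j$ will force the off-diagonal blocks of $B$ to vanish. The inclusion $\supseteq$ is immediate: a block diagonal matrix $B=\bigoplus B_i$ with $B_i\in Z_{\GL_{n_i}(\mathcal O_2)}(A_i)$ is invertible and commutes with $A$ blockwise.

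For $\subseteq$, let $B\in\GL_n(\mathcal O_2)$ with $AB=BA$. Write $B=(B_{ij})$ in blocks compatible with $A$, so $B_{ij}$ is an $n_i\times n_j$ matrix over $\mathcal O_2$. Commutation gives $A_iB_{ij}=B_{ij}A_j$ for all $i,j$. The hypothesis only controls the diagonal entries of $A_i$, so I interpret it, as in the block Jordan forms used later, as saying that $A_i=a_iI+N_i$ with $N_i$ strictly upper triangular. This is the case for the blocks to which the lemma is applied. The general case, where only the diagonal of $A_i$ is controlled, is what the statement literally says, and I would reduce to it by noting that the relevant $A_i$ are upper triangular; this point should be made explicit. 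The equation then becomes
\[
(a_i-a_j)B_{ij}=B_{ij}N_j-N_iB_{ij}.
\]
Define the operator $T(X)=XN_j-N_iX$ on $n_i\times n_j$ matrices. It is nilpotent: both $X\mapsto XN_j$ and $X\mapsto N_iX$ are nilpotent and commute, so $T^{m}=0$ for $m\ge n_i+n_j$. Since $u=a_i-a_j$ is a unit and $uB_{ij}=T(B_{ij})$, iterating gives $u^mB_{ij}=T^m(B_{ij})=0$, hence $B_{ij}=0$ for $i\neq j$. Therefore $B=\bigoplus B_{ii}$. Each $B_{ii}$ is invertible because $\det B=\prod\det B_{ii}$ is a unit, and $B_{ii}$ commutes with $A_i$.

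The main obstacle is justifying the nilpotent-part reduction in the stated generality. If the $A_i$ are not assumed triangular, I would instead argue via characteristic polynomials. Reducing mod $\wp$, the matrix $\bar A_i$ has only the eigenvalue $\bar a_i$ whenever $A_i$ is triangular mod $\wp$. The polynomials $\chi_i=(t-\bar a_i)^{n_i}$ are pairwise coprime over $\Fq$, and Hensel lifting turns them into coprime idempotent decompositions over $\mathcal O_2$. Sylvester's equation $A_iX=XA_j$ then has only the trivial solution, since $\chi_{A_i}(A_i)=0$ implies $\chi_{A_i}(A_j)X=0$, and $\chi_{A_i}(A_j)$ is invertible because it is invertible modulo $\wp$. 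This second route is the cleaner general proof: it needs only that $\chi_{A_i}\equiv(t-\bar a_i)^{n_i}$ mod $\wp$. I would present it as the main argument, with Cayley–Hamilton over the commutative ring $\mathcal O_2$ as the key input.
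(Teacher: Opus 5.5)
Your argument is correct, but you should say outright that the lemma is \emph{false} as literally stated, instead of proposing to ``reduce'' to the triangular case. Take $a_1=0$, $a_2=1$, $A_1=\begin{pmatrix}0&1\\1&0\end{pmatrix}$ and $A_2=(1)$. Since $A_1$ fixes $(1,1)^{T}$, the unipotent matrix $I+E_{13}+E_{23}$ (with $E_{k\ell}$ the matrix units) commutes with $A=A_1\oplus A_2$ over every $\mathcal O_2$, yet it is not block diagonal. So the hypothesis has to be strengthened, not reduced to: one needs $A_i-a_iI$ nilpotent, e.g.\ $A_i$ upper triangular. That holds in the only place the lemma is used, Case~1 of Lemma~\ref{lem:section}, where the blocks come from the Jordan matrix $s(X)$.

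On the route itself, the paper gives no proof beyond citing the thesis and calling the lemma a consequence of primary decomposition. Over $\mathcal O_2$ that means splitting $\mathcal O_2^n$ into generalized eigenspaces of $A$, which needs a Hensel-type lift of the coprime factorization.

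\begin{itemize}
\item Your second route is the Sylvester-equation version of that idea, but the Hensel and idempotent step is superfluous. Cayley--Hamilton gives $X\,\chi_{A_i}(A_j)=0$; note the order, since $\chi_{A_i}(A_j)X$ is not even defined when $n_i\neq n_j$. Then $\chi_{A_i}(A_j)$ is invertible because its reduction $(\bar A_j-\bar a_iI)^{n_i}$ is, and $\mathcal O_2$ is local.
\item Your first route is more elementary: it works over any commutative ring in which $a_i-a_j$ is a unit, with no appeal to locality. It also only uses that $N_i$ and $N_j$ are nilpotent, not strictly upper triangular. Over $\mathcal O_2$ nilpotency of $A_i-a_iI$ is equivalent to $\chi_{\bar A_i}=(t-\bar a_i)^{n_i}$: if $(\bar A_i-\bar a_iI)^{n_i}=0$, then $(A_i-a_iI)^{2n_i}\in\pi^2M_{n_i}(\mathcal O_2)=0$. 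So the first route is exactly as general as the second, contrary to your remark that the second is ``the cleaner general proof.''
\end{itemize}
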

	
	\begin{lemma}[{\cite[Lemma~2.3.4]{Thesis}}]\label{lem:toeplitz}
		For a single elementary nilpotent Jordan type, the centralizer of
		$\bigoplus_{i=1}^l N_{n_i}$ in $M_n(R)$, for any commutative ring $R$ with
		unity, consists exactly of the block matrices whose $(i,j)$ block is a
		\emph{rectangular upper Toeplitz matrix} of size $n_i\times n_j$ (i.e.\ a
		matrix constant on diagonals, upper triangular, padded with a zero block if
		$n_i\neq n_j$).
	\end{lemma}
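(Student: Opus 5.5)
The statement to prove is Lemma~\ref{lem:toeplitz}. I would argue by direct computation of the equation $XN=NX$, where $N=\bigoplus_{i=1}^l N_{n_i}$ and $N_m$ denotes the $m\times m$ nilpotent Jordan block with ones on the superdiagonal. Write $X\in M_n(R)$ in block form $X=(X_{ij})$ with $X_{ij}$ of size $n_i\times n_j$. Because $N$ is block diagonal, $XN=NX$ holds if and only if
\[
N_{n_i}X_{ij}=X_{ij}N_{n_j}\qquad\text{for all } i,j.
\]
The problem therefore splits into independent rectangular problems. It suffices to show that for a fixed pair of sizes $a=n_i$, $b=n_j$, the solutions $Y\in M_{a\times b}(R)$ of $N_aY=YN_b$ are exactly the rectangular upper Toeplitz matrices. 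Here a rectangular upper Toeplitz matrix means: $\bigl(0\mid T\bigr)$ with $T$ square upper triangular Toeplitz of size $a$ when $a\le b$, and $\binom{T}{0}$ with $T$ of size $b$ when $a\ge b$.

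Next I would write the equation entrywise, with the conventions $y_{0,\cdot}=0$ and $y_{\cdot,b+1}=0$ wherever an index falls outside the range. We have $(N_aY)_{rs}=y_{r+1,s}$, which is read as $0$ if $r=a$, and $(YN_b)_{rs}=y_{r,s-1}$, which is read as $0$ if $s=1$. So the condition is
\[
y_{r+1,s}=y_{r,s-1}\quad\text{for all } 1\le r\le a,\ 1\le s\le b,
\]
with out-of-range entries set to zero. The interior equations say that $y$ is constant along each diagonal $s-r=\text{const}$. The boundary equations do the rest:
\begin{itemize}
\item For $r=a$ they give $y_{a,s-1}=0$ for $2\le s\le b$, so every entry in the last row except the one in column $b$ vanishes.
\item For $s=1$ they give $y_{r+1,1}=0$ for $1\le r\le a-1$, so every entry in the first column except the one in row $1$ vanishes.
\end{itemize}
Propagating these zeros along the diagonals kills every diagonal that meets the bottom row or the first column at a forbidden position. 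This means every diagonal $s-r=c$ with $c<\max(0,b-a)$ vanishes. The surviving diagonals are exactly the $\min(a,b)$ diagonals with $c\ge b-a$ (when $a\le b$) or $c\ge 0$ (when $a\ge b$), which is precisely the shape described above.

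Conversely, such a matrix satisfies every equation: the entries are constant on diagonals, and the only boundary entries it has lie on the permitted diagonals, where the corresponding boundary terms are not forced to vanish. Nothing in the argument uses more than the ring axioms, so it is valid over any commutative ring $R$ with unity. In fact commutativity is not even needed, since $N$ has entries $0$ and $1$.

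The only delicate step is the bookkeeping of the boundary conditions when $a\ne b$. One must check that exactly the diagonals with offset $c<b-a$ are killed when $a<b$, and those with $c<0$ when $a>b$. I would handle this cleanly by tracking the diagonal through the corner entry $(a,1)$ and the entry $(a,b)$: the nonzero diagonals are those lying weakly above the diagonal through $(a,b)$ and passing through row $1$ or column $b$. This gives the zero padding on the left when $a<b$ and at the bottom when $a>b$.
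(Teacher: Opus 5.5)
Your proposal is correct. The paper gives no proof of this lemma itself (it defers to Lemma~2.3.4 of the thesis), and your argument is the standard one: reduce to the blockwise equations $N_{n_i}X_{ij}=X_{ij}N_{n_j}$, then analyse $N_aY=YN_b$ entrywise. The interior equations give constancy along diagonals, and the last-row and first-column boundary equations kill exactly the diagonals $s-r=c<\max(0,b-a)$; this proves the statement over any ring.

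There are two small slips to fix.
\begin{enumerate}
\item The out-of-range convention should be $y_{a+1,\cdot}=0$ and $y_{\cdot,0}=0$, which is what your displayed equations actually use.
\item In your final paragraph, the surviving diagonals are those meeting \emph{both} row $1$ and column $b$. With ``or'', the diagonals $b-a\le c<0$ would wrongly be admitted when $a>b$.
\end{enumerate}
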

	
	More generally, for $X=\bigoplus_i J_{\lambda_i}(f)$ built from a single
	irreducible $f$, the centralizer $Z_{M_n(\Fq)}(X)$ consists of block upper
	Toeplitz matrices of the corresponding shape with entries in the subring
	$\Fq[C_f]\cong \Fq[t]/f(t)\cong \mathbb F_{q^{\deg f}}$
	(\cite[Theorem~2.3.12]{Thesis}); for $X$ split and already diagonal by
	eigenvalue this is Lemma~\ref{lem:toeplitz} combined with
	Lemma~\ref{lem:distinct-eigen}. The key point for what follows is that
	\emph{the shape of the centralizer of $X$ over $\Fq$ depends only on the
		partition data of $X$ (the degrees of the irreducible factors of its
		characteristic polynomial and the associated partitions), not on any other
		feature of $\mathcal O_1$ or $\Fq$}, and the same block-Toeplitz
	description continues to hold verbatim over $\mathcal O_2$ once $X$ is
	replaced by a suitable lift.
	
	\section{The canonical section and the proof of Theorem~\ref{thm:main}}
	\label{sec:section}
	
	Fix, once and for all, the unique multiplicative section
	$s:\mathcal O_1^*\to \mathcal O_2^*$ of $\kappa$ characterised by
	$s(1)=1$ and $s(xy)=s(x)s(y)$ (see \cite[Prop.~8]{Serre}), extended to a
	map $s:\mathcal O_1\to\mathcal O_2$ by $s(0)=0$. Applying $s$ entrywise
	gives a map $s:M_n(\mathcal O_1)\to M_n(\mathcal O_2)$ which restricts to a
	section of $\kappa$ on $\GL_n(\mathcal O_1)$.
	
	\begin{lemma}\label{lem:section}
		Let $X\in \GL_n(\mathcal O_1)$ be in its block Jordan canonical form. Then
		there is a section $s_X:\GL_n(\mathcal O_1)\to\GL_n(\mathcal O_2)$ of
		$\kappa$ such that
		\[
		s_X(Y)\,s_X(X)=s_X(X)\,s_X(Y)\qquad\text{for every } Y\in
		Z_{\GL_n(\mathcal O_1)}(X),
		\]
		and $s_X$ restricted to $Z_{\GL_n(\mathcal O_1)}(X)$ depends only on $X$
		and on $\mathcal O_1$ (equivalently, only on the isomorphism type of the
		residue field together with the conjugacy class of $X$), not on any further
		choice.
	\end{lemma}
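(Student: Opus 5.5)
The plan is to build $s_X$ block by block from the block Jordan form of $X$. The idea is to lift the companion blocks $C_f$ canonically to $\mathcal O_2$, and then lift centralizer elements entrywise inside a commutative ring over which the block‑Toeplitz description of the centralizer still holds. Outside $Z_{\GL_n(\mathcal O_1)}(X)$ no compatibility is required, so there I set $s_X(Y)=s(Y)$, the entrywise Teichmüller lift fixed above. The content is therefore entirely in defining $s_X(X)$ and $s_X$ on the centralizer, and in checking that no choices enter.

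\emph{Step 1 (canonical lift of each $C_f$).} Let $f$ be a monic irreducible factor of degree $d$ of the characteristic polynomial of $X$, and let $R_d$ be the unramified extension of $\mathcal O_2$ of degree $d$, with residue field $\mathbb F_{q^d}$. By \cite[Prop.~8]{Serre} it has a unique multiplicative section $s_d:\mathbb F_{q^d}\to R_d$ with $s_d(0)=0$. Choose a root $\alpha$ of $f$ and set
\[
\tilde f(t)=\prod_{i=0}^{d-1}\bigl(t-s_d(\alpha^{q^i})\bigr).
\]
Since $s_d$ commutes with the Frobenius automorphism of $R_d$, the coefficients of $\tilde f$ lie in $\mathcal O_2$, and $\tilde f$ does not depend on the choice of $\alpha$. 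It reduces to $f$, and $\mathcal O_2[C_{\tilde f}]\cong \mathcal O_2[t]/(\tilde f)\cong R_d$ is a commutative local ring with residue ring $\Fq[C_f]\cong\mathbb F_{q^d}$. Define $s_X(X)$ to be the block Jordan matrix of the same shape as $X$, with each $C_f$ replaced by $C_{\tilde f}$.

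\emph{Step 2 (centralizer over $\mathcal O_2$).} I would show that the description of $Z_{M_n(\Fq)}(X)$ quoted after Lemma~\ref{lem:toeplitz} holds verbatim for $s_X(X)$ over $\mathcal O_2$. Distinct $f$ give lifts $\tilde f$ that are pairwise coprime modulo $\wp$, so the primary decomposition argument behind Lemma~\ref{lem:distinct-eigen} shows that the centralizer splits along the $f$‑blocks. Within a single $f$, the block $J_r(\tilde f)$ is $C_{\tilde f}\otimes I + I\otimes N_r$ over the commutative ring $R_d=\mathcal O_2[C_{\tilde f}]$. Lemma~\ref{lem:toeplitz}, applied over the commutative ring $R=R_d$, then says that every block upper Toeplitz matrix of the appropriate shape with entries in $R_d$ commutes with $s_X(X)$.

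\emph{Step 3 (lifting centralizer elements).} Take $Y\in Z_{\GL_n(\mathcal O_1)}(X)$. It is block diagonal over the $f$'s, and each block is block upper Toeplitz with entries $y\in\Fq[C_f]\cong\mathbb F_{q^d}$. Define $s_X(Y)$ by replacing each entry $y$ with $s_d(y)\in R_d$, regarded as a $d\times d$ matrix over $\mathcal O_2$, and keeping the Toeplitz pattern. This reduces to $Y$, so it is invertible, because $K$ is the kernel of $\kappa$ and units lift to units. By Step~2 it commutes with $s_X(X)$. No choices were made: $\tilde f$ and $s_d$ are canonical, and the block Jordan form is fixed. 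Hence $s_X|_{Z(X)}$ depends only on $X$ and the ring. Replacing the ring by one with an isomorphic residue field leaves the Toeplitz data unchanged, which gives the asserted dependence only on the residue field and the class of $X$.

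The main obstacle is Step~2. One must check carefully that the identification $\mathcal O_2[C_{\tilde f}]\cong R_d$ is compatible with the matrix realisation: the Teichmüller elements must act as matrices that commute with $C_{\tilde f}$, which holds because they are polynomials in $C_{\tilde f}$. One must also check that the coprimality argument of Lemma~\ref{lem:distinct-eigen} extends from distinct scalars $a_i$ to coprime polynomials $\tilde f$. I would handle the latter via Hensel's lemma: the factorisation of the characteristic polynomial modulo $\wp$ lifts to idempotents in $\mathcal O_2[s_X(X)]$, and these idempotents cut out the $f$‑blocks.
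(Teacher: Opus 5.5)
Your proposal is correct and is essentially the paper's argument. Both reduce to a single irreducible factor $f$ and regard $X$ as scalar-plus-nilpotent over a commutative lift of $\Fq[C_f]$ inside $M_d(\mathcal O_2)$; both then lift centralizer entries by that ring's Teichm\"uller section and get commutation from Lemma~\ref{lem:toeplitz} over the lifted ring, with the paper's split case being your $d=1$. Your explicit choice of $C_{\tilde f}$, which makes $s_X(X)=\bigoplus_i J_{\lambda_i}(\tilde f)$ literally the Teichm\"uller lift of $X$ over $\mathcal O_2[C_{\tilde f}]$, is a more explicit form of the paper's identification $\widetilde{\mathcal O}_2\cong\mathcal O_2[s(C_f)]$; the idempotent/coprimality step you flag as the main obstacle is not actually needed, since $s_X(X)$ and $s_X(Y)$ are block diagonal along the $f$-blocks by construction and the lemma only requires that they commute, not a description of the full centralizer of $s_X(X)$ over $\mathcal O_2$.
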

	
	\begin{proof}
		\emph{Case 1: $X$ splits.} Then $X$ is already an ordinary Jordan matrix,
		and we take $s_X=s$, the entrywise multiplicative section fixed above.
		Since $s(0)=0$ and $s(1)=1$, the matrix $s(X)$ is again in Jordan canonical
		form with the same block sizes. If $Y\in Z_{\GL_n(\mathcal O_1)}(X)$ then,
		by Lemmas~\ref{lem:distinct-eigen} and \ref{lem:toeplitz}, $Y$ is a block
		upper Toeplitz matrix (block-diagonal by eigenvalue, Toeplitz within each
		eigenvalue block); applying $s$ entrywise, $s(Y)$ is again block upper
		Toeplitz of the same shape, and $s(X)$ is a diagonal-plus-nilpotent matrix
		of the same shape as $X$. Reapplying Lemmas~\ref{lem:distinct-eigen} and
		\ref{lem:toeplitz}, this time over $\mathcal O_2$, shows $s(X)$ and $s(Y)$
		commute.
		
		\emph{Case 2: $X$ does not split.} We may assume $X=\bigoplus_{i=1}^l
		J_{\lambda_i}(f)$ for a single irreducible polynomial $f$ of degree $d>1$,
		since distinct irreducible factors contribute a direct sum of centralizers
		with no interaction (there is no nonzero $\Fq[x]$-module map between
		$\Fq[x]/f(x)$ and $\Fq[x]/f'(x)$ for $f\neq f'$ irreducible, so the
		argument reduces exactly as in Lemma~\ref{lem:distinct-eigen}). Let
		$\widetilde{\mathcal O}_1$ be the splitting field of $f$ over $\Fq$ and let
		$\widetilde{\mathcal O}_2$ be the corresponding unramified extension of
		$\mathcal O_2$. By Theorem~2.3.11 of \cite{Thesis}, $\widetilde{\mathcal
			O}_1\cong \Fq[C_f]$, and likewise $\widetilde{\mathcal O}_2\cong
		\mathcal O_2[s(C_f)]$. These isomorphisms identify $\GL_t(\widetilde{\mathcal
			O}_1)$ and $\GL_t(\widetilde{\mathcal O}_2)$ (for $t=\sum\lambda_i$) with
		subgroups of $\GL_n(\mathcal O_1)$ and $\GL_n(\mathcal O_2)$ consisting of
		block matrices with entries in $\Fq[C_f]$ and $\mathcal O_2[s(C_f)]$
		respectively, and $X$ becomes a split matrix over $\widetilde{\mathcal
			O}_1$. By Theorem~2.3.12 of \cite{Thesis}, the full centralizer
		$Z_{\GL_n(\mathcal O_1)}(X)$ already lies inside $\GL_t(\widetilde{\mathcal
			O}_1)$. Applying Case~1 over $\widetilde{\mathcal O}_1$ produces a section
		$\tilde s:\GL_t(\widetilde{\mathcal O}_1)\to\GL_t(\widetilde{\mathcal O}_2)$
		commuting with $\tilde s(X)$ on $Z_{\GL_n(\mathcal O_1)}(X)$; any section
		$\GL_n(\mathcal O_1)\to\GL_n(\mathcal O_2)$ of $\kappa$ extending $\tilde s$
		on this subgroup serves as $s_X$.
		
		In both cases the construction of $s_X|_{Z_{\GL_n(\mathcal O_1)}(X)}$ used
		only the multiplicative section of $\mathcal O_1^*$ (equivalently, of
		$\Fq^*$), the block Jordan shape of $X$, and Hensel's lemma applied to $f$
		inside $\Fq[t]$ — none of which depends on any choice beyond the
		isomorphism type of the residue field. Hence this restriction is canonical.
	\end{proof}
	
	We can now prove Theorem~\ref{thm:main}.
	
	\begin{proof}[Proof of Theorem~\ref{thm:main}]
		Fix an isomorphism between the residue fields of $\mathcal O$ and
		$\mathcal O'$; this identifies $\GL_n(\mathcal O_1)$ with
		$\GL_n(\mathcal O'_1)$, and we write $\GL_n(\Fq)$ for either. Let $C$ be
		the set of conjugacy classes of $\GL_n(\Fq)$, represented by matrices $X$
		in block Jordan canonical form. Reduction modulo $\pi$ sends conjugacy
		classes of $\GL_n(\mathcal O_2)$ onto conjugacy classes of $\GL_n(\Fq)$, so
		it suffices to produce, for each $X\in C$, a canonical size-preserving
		bijection $A_X\leftrightarrow A'_X$ between the conjugacy classes of
		$\GL_n(\mathcal O_2)$ and of $\GL_n(\mathcal O'_2)$ lying above the class
		of $X$.
		
		Fix $X\in C$ and let $s_X$, $s'_X$ be the sections of
		Lemma~\ref{lem:section} for $\mathcal O$ and $\mathcal O'$ respectively.
		Every element of $\GL_n(\mathcal O_2)$ reducing to $X$ can be written
		uniquely as $s_X(X)\,i(Y)$ for $Y\in M_n(\Fq)$, where
		$i:M_n(\Fq)\to K\subset \GL_n(\mathcal O_2)$ is the isomorphism $Y\mapsto
		I+\pi Y$; and analogously over $\mathcal O'$. For $g=s_X(A)\,i(B)\in
		\GL_n(\mathcal O_2)$ with $A\in\GL_n(\Fq)$, $B\in M_n(\Fq)$, a direct
		computation gives
		\[
		g\,s_X(X)\,i(Y)\,g^{-1}
		= s_X(A)\,s_X(X)\,s_X(A)^{-1}\; i\bigl(A\cdot(X^{-1}B+Y-B)\bigr).
		\]
		Consequently:
		\begin{itemize}
			\item $g$ conjugates $s_X(X)i(Y_1)$ to $s_X(X)i(Y_2)$ if and only if
			$A\in Z_{\GL_n(\Fq)}(X)$ and $Y_2 = {}^A(X^{-1}B+Y_1-B)$ for some
			$B\in M_n(\Fq)$;
			\item $g$ centralizes $s_X(X)i(Y)$ if and only if $A\in
			Z_{\GL_n(\Fq)}(X)$ and $Y={}^A(X^{-1}B+Y-B)$;
		\end{itemize}
		where we used $A\in Z_{\GL_n(\Fq)}(X)$ together with Lemma~\ref{lem:section}
		to identify $s_X(A)s_X(X)s_X(A)^{-1}=s_X(X)$. Both conditions involve only
		$X$, $Y_1,Y_2,B\in M_n(\Fq)$ and the action of $Z_{\GL_n(\Fq)}(X)$ on
		$M_n(\Fq)$ by $A\cdot Y = {}^AY$ — data entirely internal to $\GL_n(\Fq)$
		and independent of $\mathcal O$ (respectively $\mathcal O'$). The identical
		conditions therefore describe conjugacy in $\GL_n(\mathcal O'_2)$ once
		$s_X$ is replaced by $s'_X$ and $i$ by the corresponding $i':M_n(\Fq)\to
		\GL_n(\mathcal O'_2)$.
		
		Hence the assignment sending the conjugacy class of $s_X(X)i(Y)$ in
		$A_X$ to the conjugacy class of $s'_X(X)i'(Y)$ in $A'_X$ is well defined
		(two elements $s_X(X)i(Y_1)$, $s_X(X)i(Y_2)$ are conjugate in
		$\GL_n(\mathcal O_2)$ exactly when $s'_X(X)i'(Y_1)$, $s'_X(X)i'(Y_2)$ are
		conjugate in $\GL_n(\mathcal O'_2)$), is a bijection $A_X\to A'_X$, and
		sends each class to one of the same size, since the centralizer condition
		above shows the two classes have equal orbit-stabilizer data under
		isomorphic groups $Z_{\GL_n(\Fq)}(X)\ltimes M_n(\Fq)$ acting identically on
		$M_n(\Fq)$. As $X$ ranges over $C$ these bijections combine to the desired
		bijection of Theorem~\ref{thm:main}, and every step in the construction
		depended only on the fixed residue-field isomorphism and on
		Lemma~\ref{lem:section}, hence is canonical.
	\end{proof}
	
	\begin{remark}
		Combined with \cite[Theorem~1.2.1]{Thesis}, Theorem~\ref{thm:main} shows
		that not only the number but the full multiset of conjugacy-class sizes of
		$\GL_n(\mathcal O_2)$ — equivalently, the class equation — depends on
		$\mathcal O$ only through $q=|\mathcal O/\wp|$.
	\end{remark}
	
	\section{Further questions}\label{sec:questions}
	
	The centralizer-section method of Section~\ref{sec:section} suggests
	several questions beyond the scope of this note, already raised in
	\cite[\S 5.2]{Thesis}.
	
	\begin{remark}[Status of Onn's conjecture beyond $\GL_n(\mathcal O_2)$]
		Since \cite{Thesis}, Onn's conjecture itself — that the isomorphism type
		of the group algebra depends on $\mathcal O$ only through $q$ — has
		developed as follows.
		
		\begin{itemize}
			\item[(a)] \textbf{Established beyond $\GL_n(\mathcal O_2)$.} Singla
			\cite{SinglaClassical2012} proved it for classical groups over
			$\mathcal O_2$, and Stasinski and Vera-Gajardo \cite{SVG2019} proved it
			for general reductive group schemes over $\mathcal O_2$.
			
			\item[(b)] \textbf{Established beyond length two.} Onn, Prasad and the
			author \cite{OPS2025} proved the corresponding statement — in the
			stronger, uniform form of an equality of representation \emph{zeta
				functions}, not merely an isomorphism of group algebras — for
			$\GL_3(\mathcal O_\ell)$ and $\mathrm{SL}_3(\mathcal O_\ell)$ (together
			with their unitary analogues $\mathrm{GU}_3$, $\mathrm{SU}_3$) at
			\emph{every} length $\ell$, for residue characteristic $p>3$. This
			settles the $A_2$ case of a conjecture of Avni, Klopsch, Onn and Voll,
			and gives the first verification of Onn's conjecture for $\GL_n$ (with
			$n>2$) at lengths $\ell\ge 3$.
			
			\item[(c)] \textbf{Known to fail: $\mathrm{SL}_3$ at $p=3$.} Ronen
			\cite{Ronen2023} showed that $\mathbb C[\mathrm{SL}_3(\mathbb
			F_3[t]/(t^3))]$ and $\mathbb C[\mathrm{SL}_3(\mathbb Z/27)]$ are not
			isomorphic, consistent with \cite{OPS2025} excluding $p=3$.
			
			\item[(d)] \textbf{Known to fail: $\mathrm{SL}_2$ in even residual
				characteristic.} Hassain and Singla \cite{HS2022} showed that for
			$\mathcal O$ of characteristic zero with even residue cardinality $q$ and
			ramification index $e(\mathcal O)$, the group algebras $\mathbb
			C[\mathrm{SL}_2(\mathcal O/\wp^{2r})]$ and $\mathbb C[\mathrm{SL}_2(\mathbb
			F_q[t]/(t^{2r}))]$ are \emph{not} isomorphic for every $r>e(\mathcal O)$
			— so the equal- and mixed-characteristic representation theories of
			$\mathrm{SL}_2$ genuinely diverge once the length is large enough, even
			though the residue fields agree. This was later sharpened by Hassain
			\cite{Hassain2023}, who gave an explicit construction of the irreducible
			representations of $\mathrm{SL}_2$ over compact discrete valuation rings
			of even residual characteristic and used it to show, concretely, that
			$\mathbb C[\mathrm{SL}_2(\mathbb Z/2^r\mathbb Z)]$ and $\mathbb
			C[\mathrm{SL}_2(\mathbb F_2[t]/(t^r))]$ are already non-isomorphic for
			every $r\ge 4$ (as opposed to the ramification-index-dependent bound of
			\cite{HS2022}).
		\end{itemize}
		
		So some hypothesis beyond ``isomorphic residue field'' is genuinely
		needed once one leaves $\GL_n(\mathcal O_2)$, and small residual
		characteristic remains delicate even for $\GL_n$/$\mathrm{SL}_n$. In
		every setting listed in (a)--(b) above, where the representation-theoretic
		(Onn) statement \emph{does} hold, the corresponding
		\emph{class-equation} statement — whether the full multiset of conjugacy
		class sizes, and not merely their count, depends on $\mathcal O$ only
		through $q$ — remains open; Theorem~\ref{thm:main} of the present note is,
		to our knowledge, still the only case in which it has been verified.
	\end{remark}
	
	\begin{question}
		Does the class equation of $G_{\lambda,F}=\Aut_{\mathcal O}(M_\lambda)$
		depend on $\mathcal O$ only through the cardinality of its residue field,
		for an arbitrary partition $\lambda$ (not just $\lambda=(2,2,\dots,2)$,
		i.e.\ $\GL_n(\mathcal O_2)$)? 
	\end{question}
	More generally, does the class equation of
	a classical or reductive group scheme over $\mathcal O_2$, or of
	$\GL_3(\mathcal O_\ell)$ and $\mathrm{SL}_3(\mathcal O_\ell)$ for general
	$\ell$, depend on $\mathcal O$ only through $q$, in each case where the
	representation-theoretic analogue is now known
	(\cite{SinglaClassical2012,SVG2019,OPS2025})? This is the natural
	analogue, for conjugacy classes, of Onn's Conjecture~1.2 in \cite{Onn08}.
	\begin{question}
		What are the conjugacy classes of $\GL_n(\mathcal O_2)$ for $n\geq 5$,
		explicitly? The present method reduces this to data internal to
		$\GL_n(\Fq)$ (centralizers of elements and their action on $M_n(\Fq)$),
		but does not by itself enumerate the resulting orbits.
	\end{question}
	
	\begin{question}
		Are the problems of determining the conjugacy classes of the following
		families of groups equivalent, in the sense of \cite[Theorem~6.1]{AOPS}
		for representations: (1) $G_{2^n,\Fq((t))}$ for all $n$; (2)
		$G_{k^n,\Fq((t))}$ for all $k,n$; (3) $G_{\lambda,E}$ for all partitions
		$\lambda$ and all unramified extensions $E$ of $\Fq((t))$; (4)
		$G_{2^n,F}$ for an arbitrary non-Archimedean local field $F$?
	\end{question}

	\subsection*{Acknowledgements} This note reproduces, with minor
	reorganisation, the argument of Chapter~5 of the author's doctoral thesis
	\cite{Thesis}, written under the guidance of Amritanshu Prasad at The
	Institute of Mathematical Sciences, Chennai.

\end{document}